\theoremstyle{plain}
\newtheorem{theorem}{Theorem} 
\newtheorem{lemma}[theorem]{Lemma}
\newtheorem{proposition}[theorem]{Proposition}
\theoremstyle{definition}
\newtheorem{algorithm}[theorem]{Algorithm}
\theoremstyle{remark}
\newtheorem{remark}[theorem]{Remark}
\providecommand{\keywords}[1]{
  \small	
  \textbf{\textit{Keywords---}} #1}
\providecommand{\msc}[1]{
  \small	
  \textbf{\textit{2020 AMS Mathematics Subject Classification---}} #1}
\DeclareMathOperator*{\argmin}{arg\,min}
\DeclareMathOperator*{\prox}{prox}
\DeclareMathOperator*{\proj}{proj}
\newcommand{\TV}{\text{TV}}
\newcommand{\N}{\mathbb N}
\newcommand{\R}{\mathbb R}
\newcommand{\Fun}[1]{\mathcal{#1}}
\newcommand{\FK}{\Fun K}
\newcommand{\FL}{\Fun L}
\newcommand{\Mat}[1]{\bm{#1}}
\newcommand{\MI}{\Mat I}
\newcommand{\MQ}{\Mat Q}
\newcommand{\Mzero}{\Mat 0}
\newcommand{\Vek}[1]{\bm{#1}}
\newcommand{\Vx}{\Vek x}
\newcommand{\Vxi}{\Vek \xi}
\newcommand{\Vy}{\Vek y}
\newcommand{\Vz}{\Vek z}
\newcommand{\Vu}{\Vek u}
\newcommand{\Vell}{\Vek \ell}
\newcommand{\sphere}{\mathbb S}
\newcommand{\ball}{\mathbb B}
\newcommand{\tT}{\mathrm{T}}
\begin{document}

\title{Denoising Sphere-Valued Data 
by Relaxed Total Variation Regularization}

\author{Robert Beinert\thanks{R. Beinert is with the Institute of Mathematics,
	Technische Universit\"at Berlin, Stra\ss{}e des 17. Juni 136,
        10623 Berlin, Germany.}
        \qquad
        Jonas Bresch\thanks{J. Bresch is with the Institute of Mathematics,
	Technische Universit\"at Berlin, Stra\ss{}e des 17. Juni 136,
        10623 Berlin, Germany.}
}

\maketitle

\begin{abstract}
    Circle- and sphere-valued data play a significant role
    in inverse problems like 
    magnetic resonance phase imaging and radar interferometry,
    in the analysis of directional information,
    and in color restoration tasks.
    In this paper,
    we aim to restore $(d-1)$-sphere-valued signals
    exploiting the classical anisotropic total variation 
    on the surrounding $d$-dimensional Euclidean space.
    For this, 
    we propose a novel variational formulation,
    whose data fidelity is based on inner products 
    instead of the usually employed squared norms.
    Convexifying the resulting non-convex problem
    and using ADMM,
    we derive an efficient and fast numerical denoiser.
    In the special case of binary (0-sphere-valued) signals,
    the relaxation is provable tight,
    i.e.\ the relaxed solution can be used to construct
    a solution of the original non-convex problem.
    Moreover,
    the tightness can be numerically observed for
    barcode and QR code denoising
    as well as 
    in higher dimensional experiments like
    the color restoration using hue and chromaticity
    and the recovery of SO(3)-valued signals.
\end{abstract}

\keywords{Denoising of manifold-valued data,
    sphere-valued data,
    signal and image processing on graphs,
    total variation,
    regularization, 
    convex relaxation.}

\hspace{0.25cm}

\msc{94A08, 94A12, 65J22, 90C22, 90C25}

\section{Introduction}
\label{sec:intro}

With the development of modern acquisition devices,
manifold-valued data arise in an increasing number
of real-world inverse problems.
For instance,
circle-valued data appear in
color restoration in HSV or LCh spaces \cite{NikSte14},
magnetic resonance phase imaging \cite{LEHS2008},
and radar interferometry \cite{BRFa2000}.
Moreover,
sphere-valued data occur in the analysis of directional information \cite{ASWK1993}
and restoration tasks in the chromaticity-brightness setting \cite{PPS2017,QKL2010}.
Since the available measurements are usually corrupted by noise,
the denoising of the considered signals plays a major role in these applications.
For this reason,
the well-established total variation (TV) has been generalized
to the circle-, sphere-, and, more generally, manifold-valued setting
using lifting procedures \cite{CS13,LSKC13},
exploiting the geodesic distance \cite{BBSW16,WDS14,LNPS2017,BerLauSteWei18},
or employing optimal transport (OT) \cite{Ken23}.
Another denoising approach is based on so-called half-quadratic minimization models \cite{BerChaHiePerSte16,GS14}.
In general, 
the lifting ideas drastically increase the dimension,
the OT methods rely on signals on trees,
whereas
the convergence theory behind the other methods is usually based on Hadamard manifolds, 
which excludes circle- and sphere-valued data.

In this paper,
we consider signals
that are valued in the $(d-1)$-sphere $\sphere_{d-1} \coloneqq \{\Vxi \in \R^d : \|\Vxi\|_2 = 1\}$
and that are supported on a graph.
More precisely,
we consider a connected, undirected graph $G =(V, E)$,
where $V \coloneqq \{1,\dots,N\}$ denotes the set of vertices
and $E \coloneqq \{(n,m) : n < m\} \subset V \times V$  the set of edges,
which encodes the data structure. 
The number of edges is henceforth denoted by $M \coloneqq |E|$. 
We are now interested in the restoration 
of a signal $\Vx \coloneqq (\Vx_n)_{n \in V}$ 
with $\Vx_n \in \sphere_{d-1} \subset \R^d$
from disturbed signal values $\Vy = (\Vy_n)_{n \in V}$ 
with $\Vy_n \in \sphere_{d-1}$ 
or, more generally, $\Vy_n \in \R^{d}$.
This kind of denoising problem has, for instance, been considered
in \cite{BeBrSt23,condat_1D2D},
where a convex relaxation of the corresponding Tikhonov regularization in $\R^d$ is proposed.
The details of this approach are briefly discussed in Sec.~\ref{sec:tikh}.
In the same style,
we propose a convex relaxation of the corresponding anisotropic, first-order TV regularization in $\R^d$
to restore $\Vx$ from $\Vy$, see Sec.~\ref{sec:tikh_tv}.
In the special case $d = 1$,
where the problem is reduced to the recovery of binary signals,
the tightness of our relaxation can be proven,
see Theorem~\ref{tho:binary_prob_tightness_tv}.
For similar binary restoration techniques \cite{CHOGENOB2011},
this property is well-known.
In Sec.~\ref{sec:NumericalResults},
we derive a efficient numerical algorithm to solve our relaxation
based on the Alternating Direction Method of Multipliers (ADMM) \cite{bauschke}
and the fast TV program in \cite{Con12v2,Con13v4}.
In the numerical examples,
we apply the resulting algorithm 
to denoise barcodes, QR codes, and SO(3)-valued signals
as well as 
to restore the hue and chromaticity of color images.
Notably,
although the tightness cannot be proven for higher dimensions,
it can be observed numerically.

\section{Tikhonov Regularization for Sphere-Valued Data} 
\label{sec:tikh}

To recover the sphere-valued signal $\Vx \coloneqq (\Vx_n)_{n \in V}$
on the graph $G = (V,E)$
from the noisy measurement $\Vy \coloneqq (\Vy_n)_{n \in V}$
as introduced in Sec.~\ref{sec:intro}, 
the authors of \cite{BeBrSt23, condat_1D2D} propose 
to convexify the classical, non-convex Tikhonov regularization:
\begin{equation} 
    \label{eq:class-tik}
    \argmin_{\Vx \in \sphere_{d-1}^N} 
    \,\frac{1}{2} \sum_{n \in V} 
    \,\| \Vx_n - \Vy_n \|^2_2
    + \frac{\lambda}{2} \sum_{(n,m) \in E} 
    \,\| \Vx_n - \Vx_m \|^2_2,
\end{equation}
where $\lambda > 0$ is the regularization parameter.
Since $\Vx_n \in \sphere_{d-1}$ is sphere-valued,
and since $\Vy_n \in \R^d$ is fixed,
the additive terms of the objective are essentially given by
\begin{align}
\label{eq:rewriting_L2}
    \|\Vx_n - \Vy_n\|_2^2 
    = - 2\langle \Vx_n,\Vy_n\rangle + \text{const}
    \qquad\text{and}\qquad
    \|\Vx_n - \Vx_m\|_2^2 
    = - 2\langle \Vx_n,\Vx_m\rangle + \text{const}.
\end{align}
Introducing the auxiliary variable 
$\Vell \coloneqq (\Vell_{(n,m)})_{(n,m) \in E} \in \R^M$
and the new objective
$\FL: \R^{d\times N} \times \R^M \mapsto \R$
given by
\begin{equation*}
\label{eq:func}
    \FL(\Vx,\Vell) 
    \coloneqq
    - \sum_{n \in V}
    \langle\Vx_n,\Vy_n\rangle 
    - \lambda\sum_{(n,m) \in E} 
    \, \Vell_{(n,m)},
\end{equation*}
we rewrite the original Tikhonov regularization \eqref{eq:class-tik} as
\begin{equation} 
    \label{eq:class-tik-rewitten}
    \argmin_{\Vx \in \sphere_{d-1}^N, \Vell \in \R^M}
    \quad 
    \FL(\Vx,\Vell)
    \quad 
    \text{s.t.}
    \quad 
    \Vell_{(n,m)} = \langle \Vx_n,\Vx_m\rangle
    \quad
    \text{for all}
    \quad
    (n,m) \in E.
\end{equation}
The crucial idea in \cite{BeBrSt23, condat_1D2D} is 
to characterize the non-convex domain and the equation constraints
using positive semi-definite matrices.

\begin{proposition}[\hspace{-0.5pt}{\cite[Lem~7]{BeBrSt23}}] 
    \label{prop:MatRepresConvRelax}
    It holds $\Vx_n, \Vx_m \in \sphere_{d-1}$ 
    and $\Vell_{(n,m)} = \langle \Vx_n, \Vx_m\rangle$
    if and only if 
    \begin{align*}
    \MQ_{(n,m)} \coloneqq \left[\begin{smallmatrix}
        \MI_d & \Vx_n & \Vx_m \\
        \Vx_n^{\tT} & 1 & \Vell_{(n,m)} \\
        \Vx_m^{\tT} & \Vell_{(n,m)} & 1
    \end{smallmatrix}\right] \in \R^{d+2\times d+2}
    \end{align*}
    is positive semi-definite and has rank $d$.
\end{proposition}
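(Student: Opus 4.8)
The plan is to view $\MQ_{(n,m)}$ as a $2\times 2$ block matrix whose leading block $\MI_d$ is invertible and to pass to its Schur complement, which settles both implications simultaneously. Write $\MQ_{(n,m)} = \left[\begin{smallmatrix} \MI_d & \MB \\ \MB^{\tT} & \MC \end{smallmatrix}\right]$ with $\MB \coloneqq [\Vx_n \mid \Vx_m] \in \R^{d\times 2}$ and $\MC \coloneqq \left[\begin{smallmatrix} 1 & \Vell_{(n,m)} \\ \Vell_{(n,m)} & 1 \end{smallmatrix}\right]$, and set $\MS \coloneqq \MC - \MB^{\tT}\MB$. The key algebraic fact I would record first is the congruence
\[
    \MQ_{(n,m)}
    =
    \begin{bmatrix} \MI_d & \Mzero \\ \MB^{\tT} & \MI_2 \end{bmatrix}
    \begin{bmatrix} \MI_d & \Mzero \\ \Mzero & \MS \end{bmatrix}
    \begin{bmatrix} \MI_d & \MB \\ \Mzero & \MI_2 \end{bmatrix},
\]
in which the outer factors are mutually transposed and invertible. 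Consequently $\MQ_{(n,m)} \succeq 0$ if and only if $\MS \succeq 0$ (congruence preserves inertia), and $\rk(\MQ_{(n,m)}) = \rk(\MI_d) + \rk(\MS) = d + \rk(\MS)$, since the middle matrix is block-diagonal and rank is invariant under multiplication by invertible matrices.

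For the forward implication I would assume $\Vx_n, \Vx_m \in \sphere_{d-1}$ and $\Vell_{(n,m)} = \langle \Vx_n, \Vx_m\rangle$ and simply compute $\MB^{\tT}\MB = \left[\begin{smallmatrix} \|\Vx_n\|_2^2 & \langle \Vx_n,\Vx_m\rangle \\ \langle \Vx_n,\Vx_m\rangle & \|\Vx_m\|_2^2 \end{smallmatrix}\right] = \MC$, so that $\MS = \Mzero$; the displayed factorization then immediately gives $\MQ_{(n,m)} \succeq 0$ and $\rk(\MQ_{(n,m)}) = d$. For the converse I would assume $\MQ_{(n,m)} \succeq 0$ with $\rk(\MQ_{(n,m)}) = d$; then $\MS \succeq 0$ and $\rk(\MS) = 0$, hence $\MS = \Mzero$, i.e.\ $\MC = \MB^{\tT}\MB$. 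Comparing diagonal entries yields $\|\Vx_n\|_2^2 = \|\Vx_m\|_2^2 = 1$, so $\Vx_n, \Vx_m \in \sphere_{d-1}$, and comparing the off-diagonal entry yields $\Vell_{(n,m)} = \langle \Vx_n, \Vx_m\rangle$.

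I do not expect a genuine obstacle here; the argument is essentially a textbook Schur-complement computation. The only point that needs a little care is the rank identity $\rk(\MQ_{(n,m)}) = d + \rk(\MS)$, which is why I would make the explicit triangular factorization above the backbone of the proof rather than quoting it. As an alternative for the forward direction one can observe that $\MQ_{(n,m)} = \MA^{\tT}\MA$ is the Gram matrix of the columns of $\MA \coloneqq [\MI_d \mid \Vx_n \mid \Vx_m] \in \R^{d\times(d+2)}$, which is automatically positive semi-definite with rank $\rk(\MA) = d$; but since the Schur-complement route already delivers the converse at no extra cost, I would present only that one.
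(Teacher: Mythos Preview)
Your proof is correct. The paper itself does not prove this proposition; it quotes the result from \cite[Lem~7]{BeBrSt23} and later alludes to ``a Schur complement argument'' in connection with it, which is exactly the route you take. The block congruence with Schur complement $\MS = \MC - \MB^{\tT}\MB$ is the standard way to establish both directions, and your handling of the rank identity $\rk(\MQ_{(n,m)}) = d + \rk(\MS)$ via the explicit triangular factorization is clean and complete.
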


Applying Prop.~\ref{prop:MatRepresConvRelax}
to \eqref{eq:class-tik-rewitten}
and neglecting the rank constraints
yields the convexified regularization:
\begin{align}    
\label{eq:conv-real-tik-sd}
&    \argmin_{\Vx \in \mathbb R^{d\times N}, \Vell \in \mathbb R^M} 
    \quad \FL(\Vx, \Vell) \quad 
    \text{s.t.} 
    \quad
    \MQ_{(n,m)} \succcurlyeq 0
    \quad
    \text{for all}
    \quad
    (n,m) \in E.
\end{align}
Figuratively,
the inner products
$\langle \Vx_n, \Vy_n \rangle$ in $\FL$
push the minimizer in the direction of the data
whereas
the positive semi-definite matrices incorporate the convexified constraints.
Notice that 
the solution $(\Vx^*,\Vell^*)$ of $\eqref{eq:conv-real-tik-sd}$ 
solves \eqref{eq:class-tik}
if and only if all related matrices $\MQ_{(n,m)}$ have rank $d$.

\section{Total Variation Regularization of Sphere-Valued Data}
\label{sec:tikh_tv}

In contrast to the Tikhonov regularization in the previous section,
which is suitable for smooth signals,
we want to recover piecewise constant signals
from noisy measurements.
For this reason,
we replace the squared 2-norm of the regularizer in \eqref{eq:class-tik}
by the 1-norm 
yielding the total variation (TV) regularization:
\begin{equation}    
\label{eq:nonsmooth_tik}
    \argmin_{\Vx \in \sphere_{d-1}^N}
    \quad
    \frac{1}{2} \sum_{n \in V} \|\Vx_n - \Vy_n\|_2^2
    + \lambda \sum_{(n,m) \in E} \|\Vx_n - \Vx_m\|_1.
\end{equation}
Using \eqref{eq:rewriting_L2} again,
we rewrite the objective of \eqref{eq:nonsmooth_tik} into
\begin{align*}      
    \FK(\Vx) \coloneqq 
    -\sum_{n \in V} \langle\Vx_n, \Vy_n\rangle
    + \lambda\TV(\Vx)
    \qquad\text{with}\qquad
    \TV(\Vx) 
    \coloneqq
    \sum_{(n,m) \in E} \|\Vx_n - \Vx_m\|_1.
\end{align*}
Convexifying the sphere-valued domain $\sphere_{d-1}^N$,
we propose to solve the following convex minimization problem:
\begin{align}
    & \argmin_{\Vx \in \R^{d\times N}}
    \quad
    \FK(\Vx)
    \quad
    \text{s.t.}
    \quad
    \Vx_n \in \ball_d
    \quad
    \text{for all}
    \quad
    n \in V,\label{eq:nonsmooth_tik_simp}
\end{align}
where $\ball_{d} \coloneqq \{\Vxi \in \R^d : \|\Vxi\|_2 \leq 1\}$.
Similarly to Sec.~\ref{sec:tikh},
the inner products $\langle \Vx_n, \Vy_n \rangle$ in the loss 
heuristically push the solution in direction of the given data
and to the boundary of the balls.
The embedding into the Euclidean vector space
and the reformulated new objective $\FK$
are the main differences 
to other denoising models like in \cite{CHOGENOB2011,Con13v4,BerLauSteWei18}.
The convexification can be also obtained using Prop.~\ref{prop:MatRepresConvRelax} and a Schur complement argument.

\paragraph{Tightness of the Convexification}
\label{sec:tightness_S0}

For the one-dimensional setup,
the relaxed problem \eqref{eq:nonsmooth_tik_simp}
actually becomes a tight convexification of the original formulation \eqref{eq:nonsmooth_tik},
i.e.\ having a solution of \eqref{eq:nonsmooth_tik_simp},
we can always construct a solution of \eqref{eq:nonsmooth_tik}.
For this,
we introduce the characteristic $\chi_\eta$ 
regarding the level $\eta \in [-1,1]$
of a signal $\Vx \coloneqq (\Vx_n)_{n\in V} \in \ball_1^N$ by
\begin{align*}
    \chi_{\eta}(\Vx) 
    \coloneqq
    (\chi_\eta(\Vx_n))_{n \in V}
    \qquad\text{with}\qquad
    \chi_{\eta}(\Vx_n)
    \coloneqq
    \begin{cases}
    1 & \text{if } \Vx_n > \eta, \\
    -1 & \text{if } \Vx_n \le \eta. \\
    \end{cases}
\end{align*}
For any $\Vx \coloneqq (\Vx_n)_{n\in V} \in \ball_1^N$,
the characteristic $\chi_\eta(\Vx)$ is a binary signal,
i.e.\ $\chi_\eta(\Vx) \in \sphere_0^N$.
Moreover, 
the characteristic can be used 
to obtain an integral representation of absolute differences 
coming from the TV regularizer .

\begin{lemma}
\label{lem:coarea}
    For $\Vx_n, \Vx_m \in \ball_1$,
    it holds
    \begin{equation*}
        |\Vx_n - \Vx_m| 
        = \frac{1}{2}\int_{-1}^1 |\chi_\eta(\Vx_n) - \chi_\eta(\Vx_m)| \;\mathrm{d}\eta.    
    \end{equation*}
\end{lemma}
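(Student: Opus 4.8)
The plan is to reduce the identity to a pointwise evaluation of the integrand, which is then elementary to integrate. Since both sides of the claimed equality are symmetric in $\Vx_n$ and $\Vx_m$, and since the statement is trivial when $\Vx_n = \Vx_m$ (both sides vanish), I may assume without loss of generality that $\Vx_m < \Vx_n$, with $\Vx_n, \Vx_m \in \ball_1 = [-1,1]$.

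Next, I would determine $\chi_\eta(\Vx_n) - \chi_\eta(\Vx_m)$ for every $\eta \in [-1,1]$ by a short case distinction based on the definition of $\chi_\eta$. For $\eta < \Vx_m$, both $\Vx_n$ and $\Vx_m$ exceed $\eta$, so $\chi_\eta(\Vx_n) = \chi_\eta(\Vx_m) = 1$. For $\eta \ge \Vx_n$, both are at most $\eta$, so $\chi_\eta(\Vx_n) = \chi_\eta(\Vx_m) = -1$. In the remaining range $\Vx_m \le \eta < \Vx_n$ one has $\chi_\eta(\Vx_m) = -1$ and $\chi_\eta(\Vx_n) = 1$. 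Consequently, $|\chi_\eta(\Vx_n) - \chi_\eta(\Vx_m)|$ equals $2$ for $\eta \in [\Vx_m, \Vx_n)$ and $0$ otherwise.

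Finally, integrating this piecewise-constant function over $[-1,1]$, which contains the interval $[\Vx_m, \Vx_n)$ of length $\Vx_n - \Vx_m$ because $\Vx_n, \Vx_m \in \ball_1$, gives $\int_{-1}^1 |\chi_\eta(\Vx_n) - \chi_\eta(\Vx_m)|\,\mathrm d\eta = 2(\Vx_n - \Vx_m) = 2\,|\Vx_n - \Vx_m|$, and multiplying by $\tfrac12$ yields the assertion. I do not expect any real obstacle here; the only point requiring a little care is the half-open convention in the definition of $\chi_\eta$ at $\eta = \Vx_m$, which makes the exceptional set exactly $[\Vx_m, \Vx_n)$, although, being a difference of single points, any ambiguity there is Lebesgue-negligible and does not affect the value of the integral.
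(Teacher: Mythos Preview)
Your proof is correct and follows essentially the same approach as the paper: assume without loss of generality $\Vx_m < \Vx_n$, observe that the integrand equals $2$ precisely on $[\Vx_m,\Vx_n)$ and vanishes elsewhere, and integrate. The paper's version is simply more terse, writing the integral directly as $\int_{\Vx_m}^{\Vx_n} |1-(-1)|\,\mathrm d\eta$ without spelling out the case distinction or the endpoint convention.
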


\begin{proof}
    Without loss of generality,
    we assume $\Vx_m < \Vx_n$
    and obtain
    \begin{equation*}
        \int_{-1}^1 |\chi_\eta(\Vx_n) - \chi_\eta(\Vx_m)| \;\mathrm{d}\eta
        =
        \int_{\Vx_m}^{\Vx_n} |1 - (-1)| \;\mathrm{d}\eta 
        = 2 \, |\Vx_m - \Vx_n|.
        \qedhere
    \end{equation*}
\end{proof}

Note that Lem.~\ref{lem:coarea} can be interpreted as
discrete version of the well-known coarea formula \cite{Fed59,FleRis60}.

\begin{theorem}
\label{tho:binary_prob_tightness_tv}
    Let $\Vx^* \in \ball_1^N$
    be a solution of \eqref{eq:nonsmooth_tik_simp} for $d=1$.
    Then $\chi_\eta(\Vx^*)$ is a solution of \eqref{eq:nonsmooth_tik}
    for almost all $\eta \in [-1,1]$.
\end{theorem}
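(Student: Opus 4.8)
The plan is a discrete layer-cake (coarea) argument that compares the optimal values of the relaxed and the original problem; throughout we fix $d = 1$, so that $\ball_1 = [-1,1]$, $\sphere_0 = \{-1,1\}$, and $\|\cdot\|_1 = |\cdot|$. First I would complement Lemma~\ref{lem:coarea} with an analogous representation of the data fidelity: for $\Vx_n \in \ball_1$, splitting the integral at $\eta = \Vx_n$ gives the elementary identity
\[
    \Vx_n \;=\; \frac{1}{2} \int_{-1}^1 \chi_\eta(\Vx_n) \;\mathrm{d}\eta .
\]
Summing Lemma~\ref{lem:coarea} over all edges $(n,m) \in E$ and combining it with this identity, together with the linearity of the inner product in its first argument, I obtain, for every $\Vx \in \ball_1^N$, the decomposition
\[
    \FK(\Vx) \;=\; \frac{1}{2} \int_{-1}^1 \FK(\chi_\eta(\Vx)) \;\mathrm{d}\eta .
\]
Note that $\eta \mapsto \chi_\eta(\Vx)$ is piecewise constant with at most $N+1$ pieces, so $\eta \mapsto \FK(\chi_\eta(\Vx))$ is a bounded step function and all the integrals above are well defined.

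Next I would compare the optimal values. Let $v^* \coloneqq \FK(\Vx^*)$ denote the optimal value of the relaxed problem~\eqref{eq:nonsmooth_tik_simp}, and let $v_{\mathrm{b}} \coloneqq \min_{\Vz \in \sphere_0^N} \FK(\Vz)$ denote the optimal value of the original problem~\eqref{eq:nonsmooth_tik}, which is attained since $\sphere_0^N$ is finite. The inclusion $\sphere_0^N \subset \ball_1^N$ gives $v^* \le v_{\mathrm{b}}$ immediately. Conversely, $\chi_\eta(\Vx^*) \in \sphere_0^N$ for every $\eta \in [-1,1]$, hence $\FK(\chi_\eta(\Vx^*)) \ge v_{\mathrm{b}}$ pointwise in $\eta$; integrating this inequality through the decomposition applied to $\Vx = \Vx^*$ yields $v^* = \tfrac{1}{2}\int_{-1}^1 \FK(\chi_\eta(\Vx^*))\,\mathrm{d}\eta \ge v_{\mathrm{b}}$. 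Therefore $v^* = v_{\mathrm{b}}$.

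Finally, $\tfrac{1}{2}\int_{-1}^1 ( \FK(\chi_\eta(\Vx^*)) - v_{\mathrm{b}} )\,\mathrm{d}\eta = v^* - v_{\mathrm{b}} = 0$, while the integrand is a non-negative step function, so it vanishes for almost all $\eta \in [-1,1]$; equivalently, $\FK(\chi_\eta(\Vx^*)) = v_{\mathrm{b}}$, and thus $\chi_\eta(\Vx^*)$ solves~\eqref{eq:nonsmooth_tik} for almost every $\eta$. The one step that requires genuine care rather than bookkeeping is verifying that a \emph{single} layer-cake integral in $\eta$ simultaneously reproduces the fidelity term and the TV regularizer of $\FK$; once that identity is established, the value comparison and the vanishing-integrand argument are routine — in fact, since the integrand is piecewise constant, the ``almost all'' in the statement can even be strengthened to ``all but finitely many''.
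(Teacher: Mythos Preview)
Your proof is correct and follows essentially the same layer-cake argument as the paper: both establish the identity $\FK(\Vx)=\tfrac12\int_{-1}^1\FK(\chi_\eta(\Vx))\,\mathrm d\eta$ via Lemma~\ref{lem:coarea} together with $\Vx_n=\tfrac12\int_{-1}^1\chi_\eta(\Vx_n)\,\mathrm d\eta$, and then conclude that the nonnegative integrand $\FK(\chi_\eta(\Vx^*))-\FK(\Vx^*)$ vanishes almost everywhere. Your version is slightly more explicit in separating the two optimal values $v^*$ and $v_{\mathrm b}$ and in noting that the step-function structure upgrades ``almost all'' to ``all but finitely many'', but the underlying idea is identical.
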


\begin{proof}
    The proof follows ideas from \cite{CHOGENOB2011}. 
    For any $\Vx_n \in \ball_1$,
    we have the integral representation:
    \begin{equation*}
        \frac{1}{2}\int_{-1}^1 \chi_\eta(\Vx_n) \mathrm{d}\eta 
        = \frac{1}{2}\int_{-1}^{\Vx_n}1\;\mathrm{d}\eta + \frac{1}{2}\int_{\Vx_n}^1 (-1)\;\mathrm{d}\eta 
        = \Vx_n.
    \end{equation*}
    Together with Lem.~\ref{lem:coarea},
    this yields
    \begin{align*}
        \FK (\Vx)  
        &=
        - \sum_{n \in V} \, \Vx_n\Vy_n 
        + \lambda\TV(\Vx)
        = - \sum_{n \in V} \, \Vx_n\Vy_n 
        + \lambda \sum_{(n,m) \in E} |\Vx_n - \Vx_m|     
        \\
        & = \frac{1}{2}
        \int_{-1}^1 \bigg[ -\sum_{n \in V} \, \chi_\eta(\Vx_n)\Vy_n
        + \lambda \sum_{(n,m) \in E} \, |\chi_\eta(\Vx_n) - \chi_\eta(\Vx_m)| \bigg] \mathrm{d}\eta
        = \frac{1}{2}\int_{-1}^1 \FK(\chi_\eta(\Vx))\;\mathrm{d}\eta.
    \end{align*}
    For a solution $\Vx^*$ of \eqref{eq:nonsmooth_tik_simp},
    we particularly have $\FK(\Vx^*) 
    = \frac{1}{2}\int_{-1}^1
    \FK(\chi_\eta(\Vx^*))\;\mathrm{d}\eta$.
    Since $\FK(\Vx^*)$ is the minimal objective value,
    the integrand on the right side attends the minimum
    for almost all $\eta \in [-1,1]$ as well.
\end{proof}

Unfortunately, 
the tightness of the relaxation cannot be immediately extended to higher dimensions ($d>1$).
The main obstacle is the generalization of $\chi_\eta$ 
and of the related integral representations.

\section{Numerical Results}
\label{sec:NumericalResults}

The relaxed formulation \eqref{eq:nonsmooth_tik_simp} may be solved 
using any appropriate convex solver. 
To obtain a numerically efficient algorithm,
we rely on the so-called Alternating Direction Method of Multipliers (ADMM) \cite{bauschke}.
In more detail,
we consider the splitting $\FK(\Vx) + \iota_{\ball_d^N}(\Vu)$ with $\Vx - \Vu = \Mzero$,
where $\iota_{\ball_d^N}(\Vx) = 0$ for $\Vx \in \ball_d^N$ 
and $\iota_{\ball_d^N}(\Vx) = +\infty$ otherwise.
For our specific setting, 
ADMM reads as Alg.~\ref{alg:1},
where the proximation of $\TV$ is defined as
$\smash{\prox_{\TV,\gamma}(\Vz) 
    \coloneqq 
    \argmin_{\Vx \in \R^{d \times N}} 
    \bigl\{
    \TV(\Vx) + \frac{1}{2\gamma} \sum_{n \in V} \|\Vx_n - \Vz_n\|_2^2\bigr\}}$,
and $\proj_{\ball_d^N}$ denotes to orthogonal projection onto $\ball_d^N$.

\begin{algorithm}
ADMM to solve \eqref{eq:nonsmooth_tik_simp}.
    \label{alg:1}
    Choose $\Vx^{(0)}  = \Vu^{(0)} = \Vz^{(0)} = \Mzero \in \R^{d\times N}$, 
    step size $\rho > 0$
    and TV parameter $\lambda > 0$.
    \\
    \textbf{For} $i \in \N$ \textbf{do:}\\
    \hspace*{0.5cm} $\Vx^{(i+1)} 
    =
    \prox_{\FK,\frac{1}{\rho}}(\Vu^{(i)} - \Vz^{(i)})
    =
    \prox_{\TV, \frac{\lambda}{\rho}}(\Vu^{(i)} - \Vz^{(i)} + \Vy\rho^{-1})$, \\
    \hspace*{0.5cm} $\Vu^{(i+1)} 
    = 
    \prox_{\iota_{\ball_d^N}}(\Vx^{(i+1)} + \Vz^{(i)})
    = \proj_{\ball_d^N}(\Vx^{(i+1)} + \Vz^{(i)})$, \\
    \hspace*{0.5cm} $\Vz^{(i+1)} 
    = \Vz^{(i)} + \Vx^{(i+1)} - \Vu^{(i+1)}$.
\end{algorithm}

\begin{remark}
    Notice that we rely on an anisotropic TV regularization.
    Therefore, 
    we can apply the fast TV programs \cite{Con12v2, Con13v4} coordinatewise
    to compute $\prox_{\TV, \frac\lambda\rho}$ efficiently.
    The convergence of Alg.~\ref{alg:1} is ensured 
    by \cite[Cor.~28.3]{bauschke}.
\end{remark}

The employed algorithm is implemented\footnote{
The code is available at GitHub \url{https://github.com/JJEWBresch/relaxed_tikhonov_regularization}.} 
in Python~3.11.4 using Numpy~1.25.0 and Scipy~1.11.1.
The following experiments are performed on an off-the-shelf iMac 2020
with Apple M1 Chip (8‑Core CPU, 3.2~GHz) and 8~GB RAM.
In all comparisons,
we apply Alg.~\ref{alg:1} to a range of regularization parameters 
and finally choose the parameter 
yielding the smallest mean squared error (MSE).

\paragraph{Binary Signal Denoising}

Typical real-world examples of binary signals are
barcodes and QR codes.
In this example,
the bars are $\sphere_0$-valued,
consist of $5$ or $10 \times 10$ pixels,
and are randomly generated
according to independent Rademacher distribution.
The obtained signals are afterwards pixelwise disturbed 
by additive Gaussian noise,
whose standard deviation is denoted by $\sigma$.
To denoise the synthetic data,
we apply Alg.~\ref{alg:1}
and compare the results with
(i) the anisotropic TV denoising of barcodes and QR codes by Choksi et al. (ANISO-TV) \cite{CHOGENOB2011}
and
(ii) the fast TV program by Condat (fast-TV) \cite{Con12v2,Con13v4}.
Notice that our method and ANISO-TV are tight 
such that the characteristic of the numerical solution 
almost surely solves \eqref{eq:nonsmooth_tik}.
In the experiments, 
we employ $\chi_0$.
In difference, 
fast-TV is a state-of-the-art unconstraint denoiser 
providing a solution in $\R^N$,
which we afterwards project to $\sphere_0^N$.
This corresponds to the first iteration of our method.
A qualitative comparison of the considered methods is given in 
Figure~\ref{fig:S0_1D_comp_ADMM_ANISO} and \ref{fig:S0-2D}.
A quantitative study is reported in 
Table~\ref{table:S0-signal} and \ref{table:S0-image},
where we compare the Mean Square Error (MSE)
and the Mean Intersection over Union (MIoU)
averaged over 50 randomly generated datasets.
The MIoU here corresponds to the ratio of
correctly recovered pixels
and the total number of pixels.
In summary,
we outperform ANISO-TV qualitative as well as quantitative.
Moreover, 
our method here already yields $\sphere_0^N$ solutions
making the final projection step obsolete---%
in contrast to ANISO-TV.
We observe that fast-TV here yields a rapid and accurate heuristic.

\begin{figure}[t!]
\includegraphics[width=0.21\linewidth, clip=true, trim=440pt 50pt 2450pt 40pt]{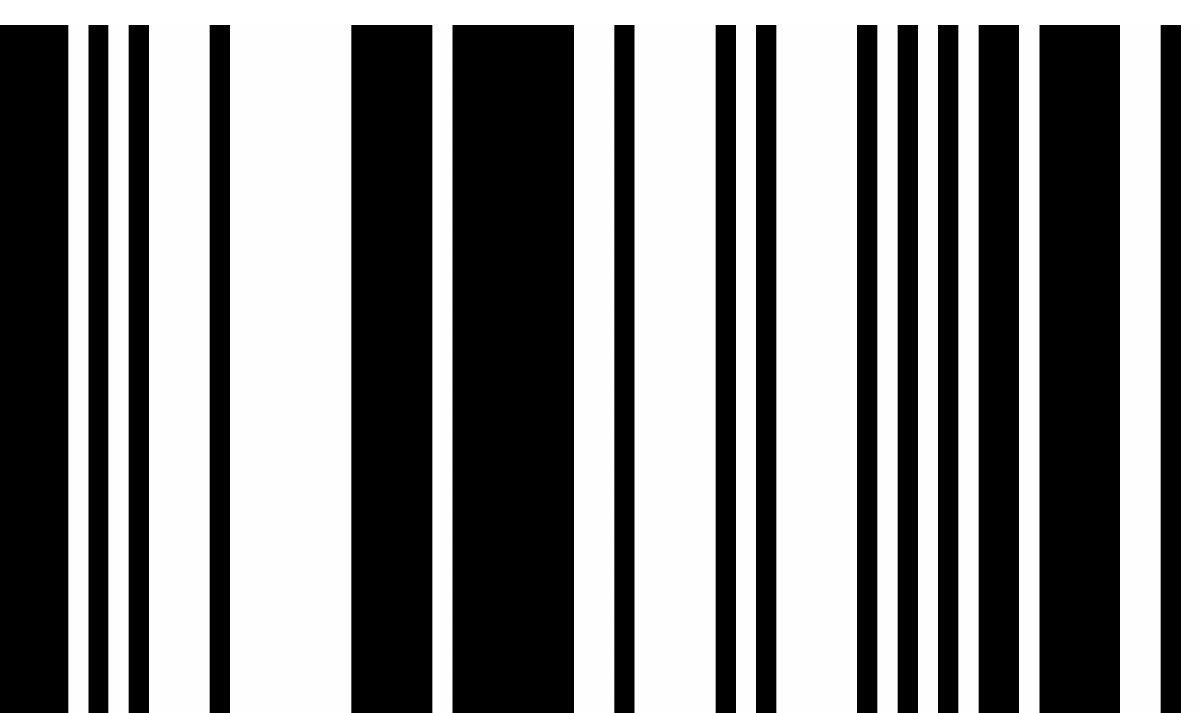}{\,\small{i)}}
\includegraphics[width=0.21\linewidth, clip=true, trim=1150pt 50pt 1750pt 40pt]{diagrams_barcode/S0_comp_tight.pdf}{\,\small{ii)}}
\includegraphics[width=0.21\linewidth, clip=true, trim=1830pt 50pt 1050pt 40pt]{diagrams_barcode/S0_comp_tight.pdf}{\,\small{iii)}}
\includegraphics[width=0.21\linewidth, clip=true, trim=2530pt 50pt 350pt 40pt]{diagrams_barcode/S0_comp_tight.pdf}{\,\small{iv)}}
\begin{minipage}{.48\textwidth}
\caption{
Barcode denoising example:\linebreak
(i) ground truth, 
(ii) noisy data
($\smash{\sigma = \sqrt{2}\cdot0.7}$),
(iii) Alg.~\ref{alg:1} 
($\lambda = 1.0$, $\rho = 0.1$) without projection,
(iv) ANISO-TV 
($\lambda = 1.4$) 
with projection $\chi_0$,
(v) fast-TV ($\lambda = 1.0$) without projection,
(vi) fast-TV ($\lambda = 1.0$) with projection $\chi_0$. 
}
\label{fig:S0_1D_comp_ADMM_ANISO}
\end{minipage}
\hspace{0.05cm}
\begin{minipage}{.50\textwidth}
    \includegraphics[width=0.42\linewidth, clip=true, trim=1830pt 50pt 1050pt 40pt]{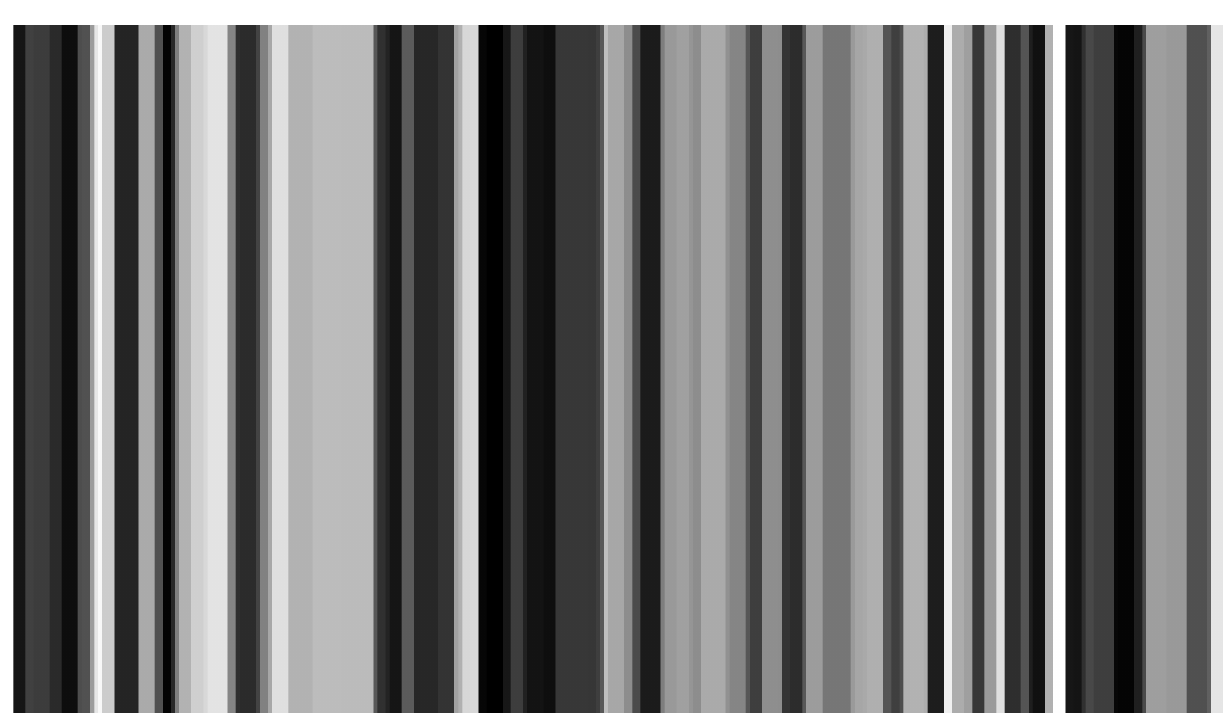}{\,\small{v)}}
    \hspace{0.15cm}
    \includegraphics[width=0.42\linewidth, clip=true, trim=2530pt 50pt 350pt 40pt]{diagrams_barcode/S0_comp_non_tight.pdf}{\,\small{vi)}}
\end{minipage}
\end{figure}

\begin{table}
\begin{minipage}{0.49\textwidth}
\caption{Averages for 50 randomly generated barcodes
for different noise levels.
One specific instance is illustrated in Fig.~\ref{fig:S0_1D_comp_ADMM_ANISO}.
}
\label{table:S0-signal}
\resizebox{\textwidth}{!}{
\begin{tabular}{@{}lllllll@{}}
\toprule
\multirow{2}{*}{\rotatebox{90}{$= \sigma$}} & \multirow{2}{*}{Algorithm} &\multicolumn{2}{c}{signal errors} & \multirow{2}{*}{$\lambda$}
& time & distance \\
		    &     & MSE  
      & MIoU &     
      & (sec.) & to sphere\\
\midrule 
\multirow{3}{*}{\rotatebox{90}{$\sqrt{2}\frac{3}{10}$}} & 
fast-TV  & \textbf{0.00323}
& \textbf{0.99652} & $1.0$ 
& \multirow{3}{*}{\rotatebox{90}{$< 0.1$}} & 0.01887 \\
& ANISO-TV 	& 0.00328 
& 0.99639 & $0.6$
&  & 0.00001\\
& Alg.~\ref{alg:1} 	& \textbf{0.00323} 
& \textbf{0.99652} & $1.0$ 
&  & \textbf{0.00000} \\
\midrule
\multirow{3}{*}{\rotatebox{90}{$\sqrt{2}\frac{5}{10}$}} & 
fast-TV & \textbf{0.00983}
& \textbf{0.98259} & $1.0$
& \multirow{3}{*}{\rotatebox{90}{$< 0.1$}} & 0.04521 \\
& ANISO-TV 	& 0.01027 
& 0.98154 & $0.7$
&   & 0.00023 \\
& Alg.~\ref{alg:1} 	& \textbf{0.00983} 
& \textbf{0.98259} & $1.3$
&   & \textbf{0.00000}\\
\midrule
\multirow{3}{*}{\rotatebox{90}{$\sqrt{2}\frac{7}{10}$}} & 
fast-TV & \textbf{0.01885} 
& \textbf{0.94201} & $1.3$ 
& \multirow{3}{*}{\rotatebox{90}{$< 0.1$}} & 0.08776 \\
& ANISO-TV    & 0.01958 
&  0.93811 & $1.0$ 
&   & 0.00443 \\
& Alg.~\ref{alg:1} 	& \textbf{0.01885}
& \textbf{0.94201} & $1.7$ 
&   & \textbf{0.00000}\\
\midrule
\multirow{3}{*}{\rotatebox{90}{$\sqrt{2}\frac{9}{10}$}} & 
fast-TV & \textbf{0.02423} 
& \textbf{0.90976} & $2.2$
& \multirow{3}{*}{\rotatebox{90}{$< 0.1$}} & 0.10067 \\
& ANISO-TV    & 0.02638
& 0.89385 & $1.2$ 
&  & 0.00644\\
& Alg.~\ref{alg:1} 	& \textbf{0.02423} 
& \textbf{0.90976} & $2.2$
&   & \textbf{0.00000}\\
\bottomrule
\end{tabular}}
\end{minipage}
\hfill
\begin{minipage}{0.49\textwidth}
\caption{Averages for 50 randomly generated QR codes for different noise levels.
One specific instance is illustrated in Fig.~\ref{fig:S0-2D}.
}
\label{table:S0-image}
\resizebox{\textwidth}{!}{\begin{tabular}{@{}lllllll@{}}
\toprule
\multirow{2}{*}{\rotatebox{90}{$= \sigma$}} & \multirow{2}{*}{Algorithm} &\multicolumn{2}{c}{signal errors} & \multirow{2}{*}{$\lambda$} ´
& time & distance\\
		    &     & MSE  
      & MIoU &    & 
      (sec.) & to sphere\\
\midrule 
\multirow{3}{*}{\rotatebox{90}{$\sqrt{2}\frac{3}{10}$}} & 
fast-TV  & \textbf{0.00008} 
& \textbf{0.99984} & $0.3$ 
& $<0.1$ & 0.05886 \\
& ANISO-TV 	& 0.00030 
& 0.99813 & $1.4$ 
& 1.7 & 0.00086\\
& Alg.~\ref{alg:1} 	& \textbf{0.00008} 
& \textbf{0.99984} & $0.3$ 
& 0.9 & \textbf{0.00000} \\
\midrule
\multirow{3}{*}{\rotatebox{90}{$\sqrt{2}\frac{5}{10}$}} & 
fast-TV & \textbf{0.00036} 
& \textbf{0.99734} & $0.7$ 
& $<0.1$ & 0.07879 \\
& ANISO-TV 	& 0.00112 
& 0.97495 & $1.9$
& 1.8 & 0.00122\\
& Alg.~\ref{alg:1} 	& \textbf{0.00036} 
& \textbf{0.99734} & $0.7$ 
& 1.1 & \textbf{0.00000} \\
\midrule
\multirow{3}{*}{\rotatebox{90}{$\sqrt{2}\frac{7}{10}$}} & 
fast-TV  & 0.00069 
& 0.99018 & $1.0$
& $<0.1$ & 0.10021\\
& ANISO-TV 	& 0.00190 
& 0.92942 & $2.4$
& 1.9 & 0.00732 \\
& Alg.~\ref{alg:1} 	& \textbf{0.00069} 
& \textbf{0.99030} & $1.0$ 
&  1.1 & \textbf{0.00000} \\
\midrule
\multirow{3}{*}{\rotatebox{90}{$\sqrt{2}\frac{9}{10}$}} & 
fast-TV & 0.00106 
& 0.97741 & $1.7$ 
& $<0.1$ & 0.12263 \\
& ANISO-TV    & 0.00263 
&  0.86956 & $2.9$
&  2.1 & 0.01445 \\
& Alg.~\ref{alg:1} 	& \textbf{0.00106} 
& \textbf{0.97753} & $1.7$ 
& 1.2 & \textbf{0.00000} \\
\bottomrule
\end{tabular}}
\end{minipage}
\end{table}

\begin{figure}[t]
\includegraphics[width=0.30\linewidth, clip=true, trim=160pt 40pt 1020pt 55pt]{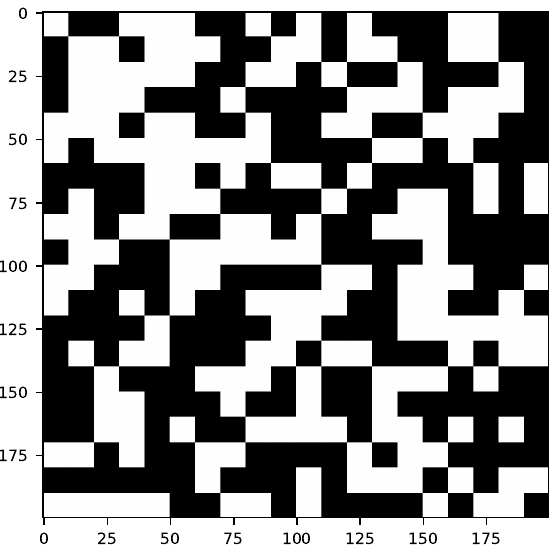}\raisebox{8pt}{\,\small{i)}}
\includegraphics[width=0.30\linewidth, clip=true, trim=452pt 40pt 728pt 55pt]{diagrams_barcode/0D_2D_comp_tight.pdf}\raisebox{8pt}{\,\small{ii)}}
\includegraphics[width=0.30\linewidth, clip=true, trim=745pt 40pt 435pt 55pt]{diagrams_barcode/0D_2D_comp_tight.pdf}\raisebox{8pt}{\,\small{iii)}}
\includegraphics[width=0.30\linewidth, clip=true, trim=1036pt 40pt 143pt 55pt]{diagrams_barcode/0D_2D_comp_tight.pdf}\raisebox{8pt}{\,\small{iv)}}
\includegraphics[width=0.30\linewidth, clip=true, trim=745pt 40pt 435pt 55pt]{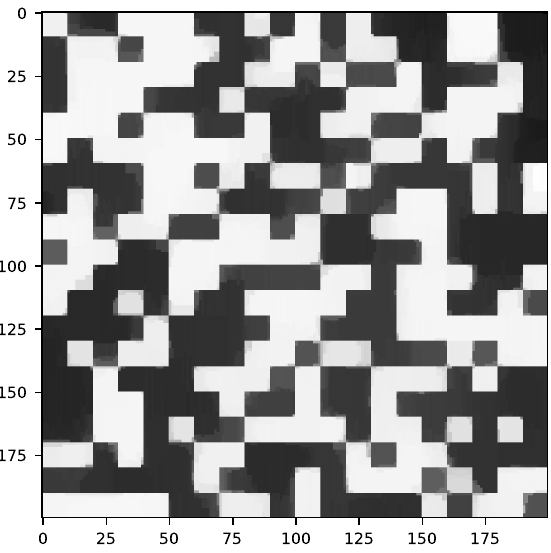}\raisebox{8pt}{\,\small{v)}}
\includegraphics[width=0.30\linewidth, clip=true, trim=1036pt 40pt 143pt 55pt]{diagrams_barcode/0D_2D_comp_non_tight.pdf}\raisebox{8pt}{\,\small{vi)}}

\caption{
QR code denoising example: 
(i) ground truth,
(ii) noisy data 
($\sigma = \sqrt{2}\cdot 0.5$),
(iii) Alg.~\ref{alg:1} 
($\lambda = 1.0$, $\rho = 0.1$) without projection,
(iv) ANISO-TV 
($\lambda = 1.6$) 
with projection $\chi_0$,
(v) fast-TV 
($\lambda = 1.0$) without projection,
(vi) fast-TV 
($\lambda = 1.0$) with projection $\chi_0$.
}
\label{fig:S0-2D}
\end{figure}

\paragraph{Circle-Valued Signal Denoising}

For $\sphere_1$-valued signals,
we compare Alg.~\ref{alg:1}
with (i) the Cylic Proximal Point Algorithm for first order differences (CPPA-TV) from \cite{BerLauSteWei18}
and (ii) the previous fast-TV heuristic,
where the fast TV program \cite{Con12v2,Con13v4} is applied coordinatewise,
and the result is orthogonally projected onto $ \sphere_1^N$ afterwards.
The TV regularizer for CPPA-TV is based on geodesic distances
whereas our TV term is based on the Manhattan norm in $\R^2$.
For the one-dimensional setting,
we rely on the ground truth from \cite[Sec.~5.1]{BerLauSteWei14} 
disturbed by wrapped Gaussian noise with standard deviation $\sigma$,
and, for the two-dimensional setting,
on the ground truth from \cite{Ken23}
disturbed by von~Mises--Fischer noise with capacity $\kappa$.
We stop CPPA-TV as soon as 
the residuum (the mean difference between subsequent iterates) is smaller than $10^{-4}$ 
and Alg.~\ref{alg:1} as soon as
the residuum is smaller than $10^{-6}$
or the distance to the circle is smaller than $10^{-5}$.
Similar to the previous paragraph,
Figure~\ref{fig:S1-signal} and \ref{fig:S1-2D-toy}
show qualitative denoising results and
Table~\ref{table:S1-signal} and \ref{table:S1-image}
contain quantitative studies.
Compared to CPPA-TV,
Alg.~\ref{alg:1} shows a significantly smaller computation time
and an improved denoising effect.
Fast-TV is again a rapid alternative
heuristically yielding comparable results as Alg.~\ref{alg:1}.
Similarly to \cite{BeBrSt23},
a real-world application for $\sphere_1$-valued data
is the hue denoising with respect to the HSV color space.
In Figure~\ref{fig:S1-2D-hue},
we repeat the experiment in \cite{BeBrSt23}
to denoise the color of a coral,
however,
using the TV instead of the Tikhonov regularization.
In all experiments,
Alg.~\ref{alg:1} directly returns an $\sphere_1$-valued 
and therefore global solution of \eqref{eq:nonsmooth_tik}
making the final projection again obsolete.

\begin{figure}[t]
\includegraphics[width=\linewidth, clip=true, trim=90pt 20pt 80pt 40pt]{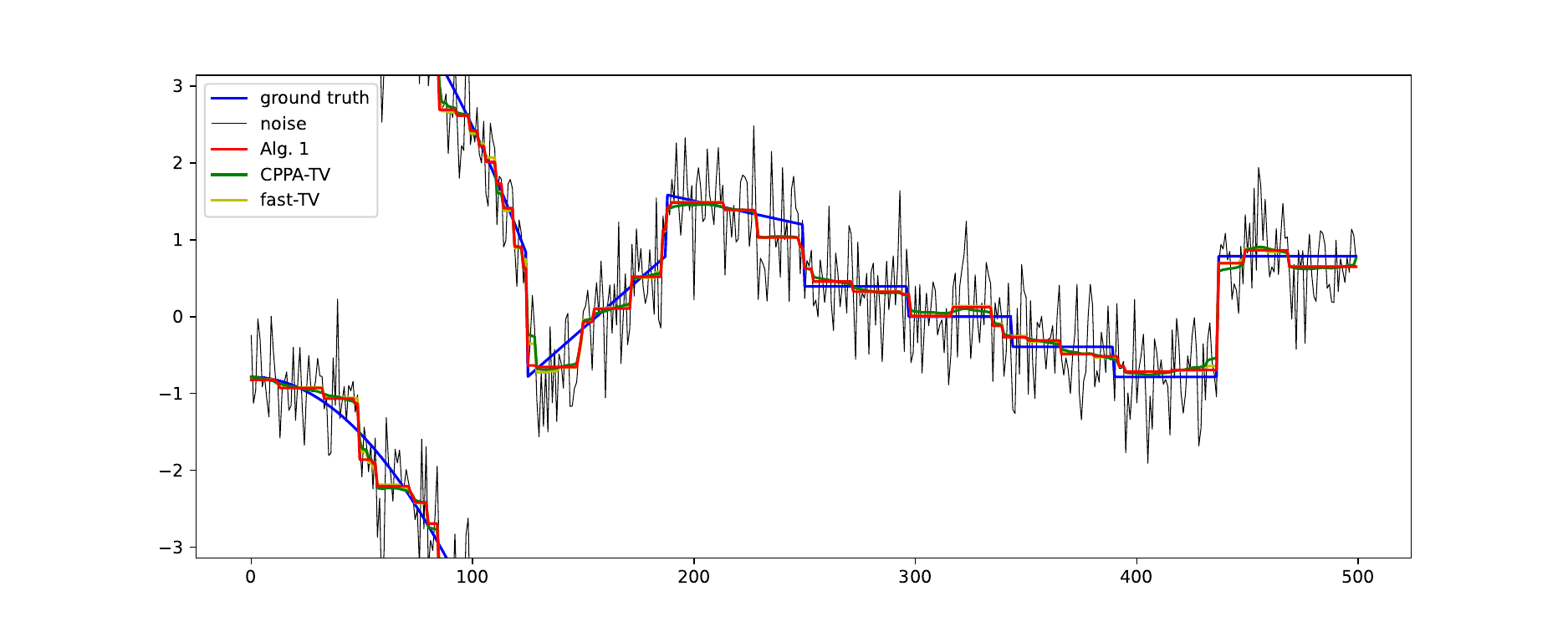}
\caption{Circle-valued signal denoising 
    with ground truth from \cite{BerLauSteWei18} 
    and wrapped Gaussian noise with standard deviation $\sigma \smash{=\frac{1}{2}}$.
    Solutions calculated by:
    (red) Alg.~\ref{alg:1} ($\lambda = 1.2$, $\rho = 1$) without projection, 
    (green) CPPA-TV ($\lambda = 1.8$, $\lambda_0 = 10$) without projection,
    and 
    (yellow) fast-TV heuristic ($\lambda = 1.0$) with final projection.}
    \label{fig:S1-signal}
\end{figure}

\begin{figure}[t!]
\includegraphics[width=\linewidth, clip=true, trim=140pt 20pt 130pt 40pt]{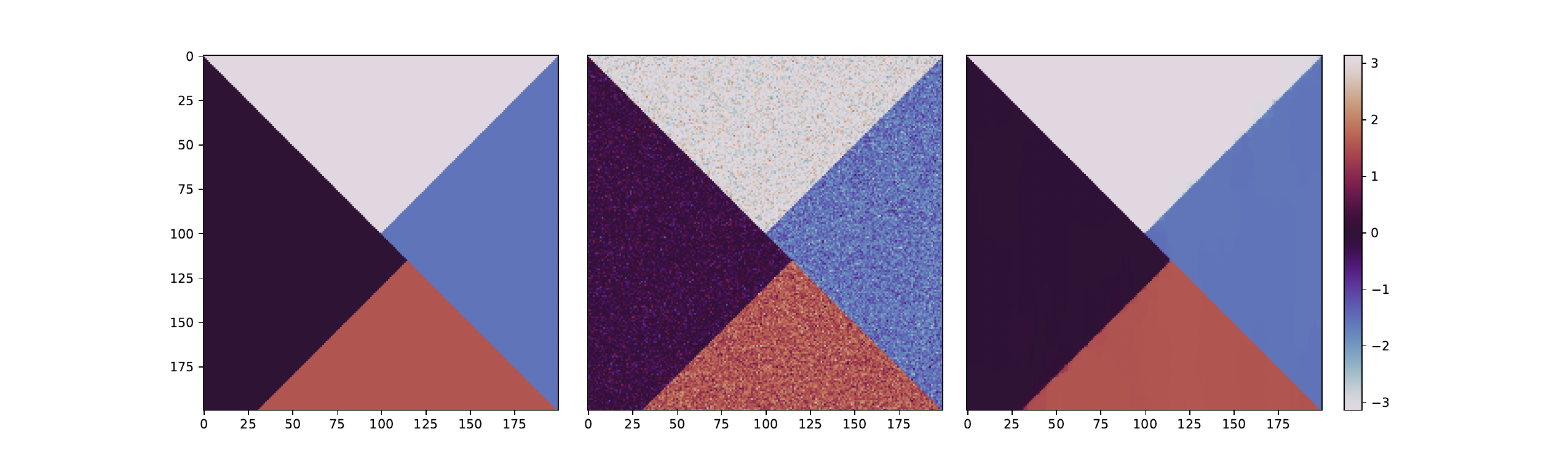}
\caption {
Toy-data example following \cite{Ken23} for $\sphere_1$-image denoising
(from left to right):
\linebreak
(i) ground truth,
(ii) noisy measurement generated 
by the von Mises--Fisher distribution with $\kappa = 10$,
(iii) solution via Alg.~\ref{alg:1} ($\lambda = 0.55$, $\rho = 10$)
without final projection.
}
\label{fig:S1-2D-toy}   
\end{figure}

\begin{table}[t!]
\begin{minipage}{.48\textwidth}
\caption{
Averages for 20 randomly generated noisy instances of the ground truth in Fig.~\ref{fig:S1-signal}
for different noise levels.
}
\label{table:S1-signal}
\resizebox{\textwidth}{!}{
\begin{tabular}{@{}lllllllll@{}}
\toprule
\multirow{2}{*}{\rotatebox{90}{$= \sigma$}} & \multirow{2}{*}{Algorithm} &\multicolumn{1}{c}{signal error} & \multirow{2}{*}{$\lambda$} & time & distance\\
		    &     & MSE 
      &   & (sec.) & to sphere\\
\midrule 
\multirow{3}{*}{\rotatebox{90}{$\frac{1}{5}$}} & 
CPPA-TV 	& 0.0052151
& $0.9$ & $8.5$ & ---\\
& fast-TV 	& 0.0051141 
& $0.6$ & $<0.1$ & $0.00125$\\
& Alg.~\ref{alg:1} 	& \textbf{0.0049986} 
& $0.7$ & $0.2$ & \textbf{0.00000}\\
\midrule
\multirow{3}{*}{\rotatebox{90}{$\frac{1}{4}$}} & 
CPPA-TV 	& 0.0072028 
& $1.3$ & $8.6$ & ---\\
& fast-TV 	& 0.0071249 
& $0.7$ & $<0.1$ & $0.00633$\\
& Alg.~\ref{alg:1} 	& \textbf{0.0070003} 
& $0.9$ & $0.28$ & \textbf{0.00000}\\
\midrule
\multirow{3}{*}{\rotatebox{90}{$\frac{1}{3}$}} & 
CPPA-TV 	& 0.0116987 
& $1.7$ & $8.9$ & ---\\
& fast-TV 	& 0.0117756 
& $0.7$ & $<0.1$ & $0.00854$\\
& Alg.~\ref{alg:1} 	& \textbf{0.0114888} 
& $1.2$ & $0.35$ & \textbf{0.00000}\\
\midrule
\multirow{3}{*}{\rotatebox{90}{$\frac{1}{2}$}} & 
CPPA-TV 	& 0.0215876
& $2.0$ & $9.8$ & --- \\
& fast-TV 	& 0.0215188 
& $0.9$ & $<0.1$ & $0.01805$\\
& Alg.~\ref{alg:1} 	& \textbf{0.0214925}	
& $1.4$ & $0.48$ & \textbf{0.00000}\\
\bottomrule
\end{tabular}}
\end{minipage}
\hfill
\begin{minipage}{0.48\textwidth}
\caption{
Averages for 20 randomly generated noisy instances of the ground truth in Fig.~\ref{fig:S1-2D-toy}
for different noise levels. 
}
\label{table:S1-image}
\resizebox{\textwidth}{!}{
\begin{tabular}{@{}llllll@{}}
\toprule
\multirow{2}{*}{\rotatebox{90}{$= \kappa$}} & \multirow{2}{*}{Algorithm} &\multicolumn{1}{c}{signal error} & \multirow{2}{*}{$\lambda$} & time & distance\\
		    &     & MSE  
      &    & (sec.) & to sphere\\
\midrule 
\multirow{3}{*}{\rotatebox{90}{$50$}} & 
CPPA-TV 	& 0.00076 
& $0.15$ & $128.2$ & --- \\
& fast-TV 	& 0.00556 
& $0.25$ & $<0.1$ & 0.00055\\
& Alg.~\ref{alg:1} 	& \textbf{0.00043} 
& $0.2$ & $4.3$ & \textbf{0.00000}\\
\midrule 
\multirow{3}{*}{\rotatebox{90}{$20$}} & 
CPPA-TV 	& 0.00198 
& $0.25$ & $166.1$ & --- \\
& fast-TV 	& 0.00199 
& $0.25$ & $<0.1$ & 0.00098 \\
& Alg.~\ref{alg:1} 	& \textbf{0.00107} 
& $0.3$ & $6.2$ & \textbf{0.00000}\\
\midrule
\multirow{3}{*}{\rotatebox{90}{$10$}} & 
CPPA-TV 	& 0.00409 
& $0.55$ & $191.7$ & --- \\
& fast-TV 	& 0.00388 
& $0.25$ & $<0.1$ & 0.00155\\
& Alg.~\ref{alg:1} 	& \textbf{0.00218} 
& $0.45$ & $10.5$ & \textbf{0.00000}\\
\bottomrule
\end{tabular}}
\end{minipage}    
\end{table}

\begin{figure}[t!]
\includegraphics[width=\linewidth, clip=true, trim=140pt 20pt 130pt 40pt]{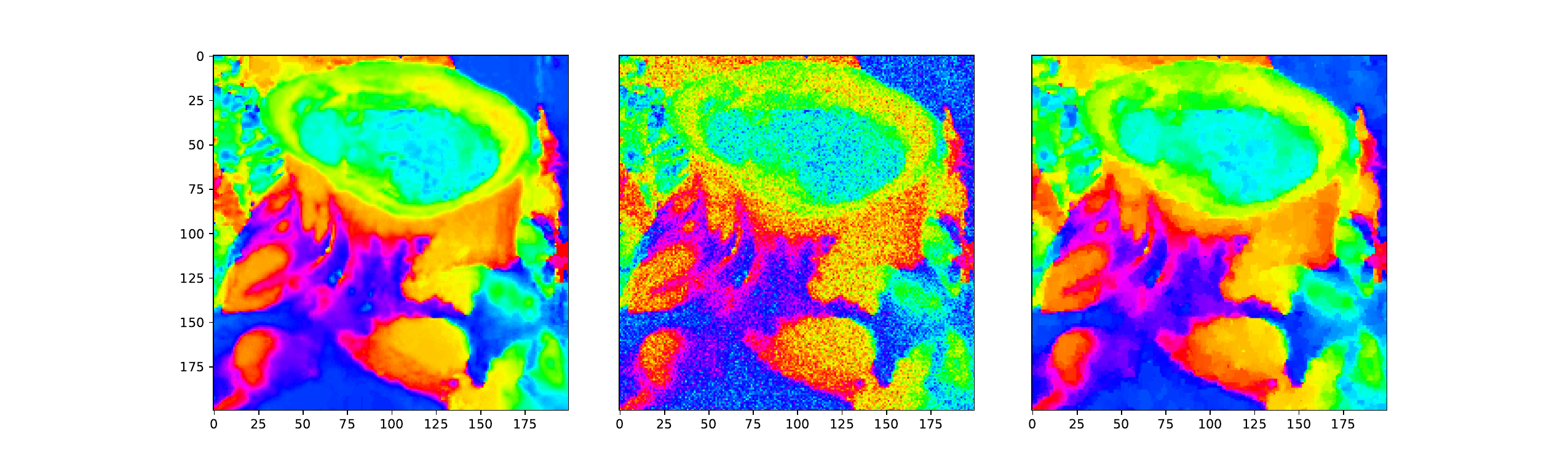}
\vspace{-0.5cm}
\caption {
Hue-denoising for the coral image from \cite{BeBrSt23} in the HSV color space
(from left to right): 
(i) ground truth, 
(ii) noisy measurements generated by the von~Mises--Fisher distribution with $\kappa~=~10$, 
(iii) solution via Alg.~\ref{alg:1} ($\lambda~=~0.2$, $\rho~=~100$) without final projection.
The computation takes less than 3 seconds
with MSE $9.843\cdot10^{-4}$ 
and averaged distance to the sphere $2.5\cdot10^{-6}$.
}
\label{fig:S1-2D-hue}
\end{figure}

\begin{figure}[t!]
\includegraphics[width=\linewidth, clip=true, trim=140pt 20pt 130pt 40pt]{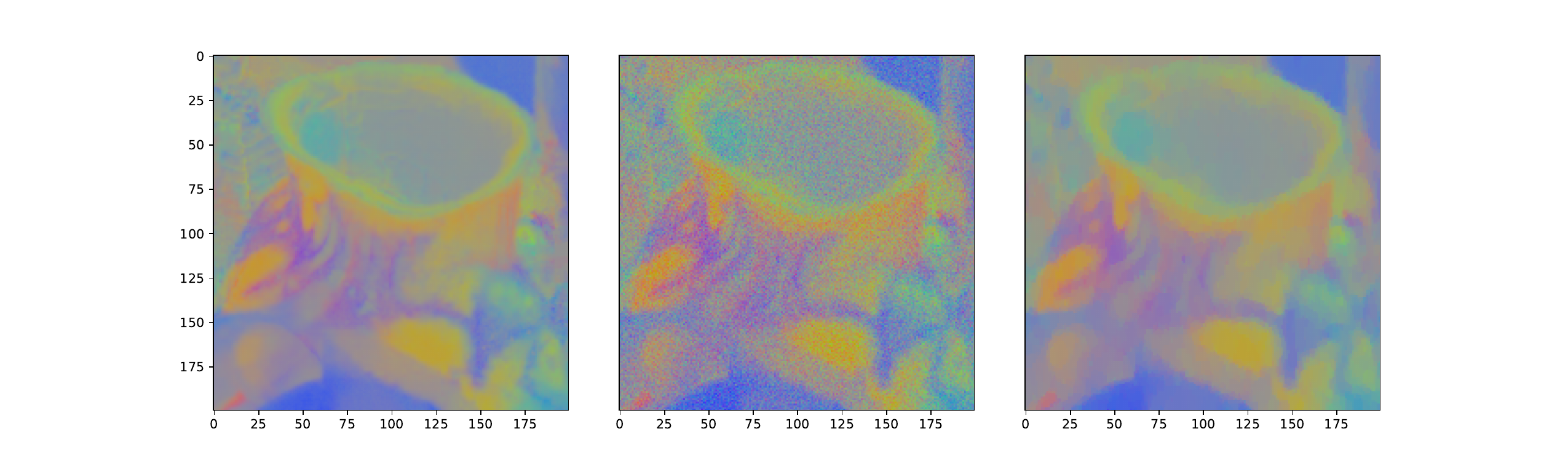}
\vspace{-0.5cm}
\caption { 
Chromaticity-denoising for the coral image from \cite{BeBrSt23}
in the RGB color space (from left to right):
(i) ground truth, 
(ii) noisy measurement generated by the von~Mises--Fisher distribution with capacity $\kappa~=~200$,
(iii) solution via Alg.~\ref{alg:1} 
($\lambda~=~0.06$, $\rho~=~100$)
without final projection 
and with MSE $9.46\cdot10^{-4}$
and averaged distance to the sphere $1.34\cdot 10^{-6}$.
}
\label{fig:S2-2D-chromaticity}
\end{figure}

\paragraph{Sphere- and SO(3)-Valued Signal Denoising}

Looking at the chromaticity of color image,
we naturally have to deal with $\sphere_2$-valued data. 
More precisely,
the chromaticity is defined as the normalized RGB vector.
Similarly to the hue denoising above,
we apply our method to denoise the chromaticity 
of the coral image from \cite{BeBrSt23}.
The proof-of-concept result is shown in Figure~\ref{fig:S2-2D-chromaticity},
where we stop the algorithm as soon as the residuum is at most $10^{-6}$ 
or the distance to the sphere is at most $10^{-4}$.
Denoising the chromaticity of the whole image with Alg.~\ref{alg:1}
here requires merely 8 seconds
and immediately yields a $\sphere_2$-valued solution.

Finally,
we apply Alg.~\ref{alg:1} for denoising SO(3)-valued data. 
This kind of data occurs for example in electron backscatter tomography \cite{BHS11,BHJPSW10}.
The crucial idea is here the representation 
of a 3d rotation matrix as unit quaternion 
and thus as unit vector in $\R^4$ up to sign. 
More precisely,
the $\sphere_3$ forms a double cover of SO(3)
\cite[Ch.~III, Sec.~10]{Bre93}.
In analogy to \cite[Sec.~4]{BeBrSt23},
Figure~\ref{fig:SO(3)-2D-synthetic} shows an SO(3) denoising task 
for appropriate cartoon-like toy data.
Using the same stopping criteria as before,
the denoising takes 12 seconds.
Numerically,
we observe convergence to a sphere-valued solution,
which can be immediately interpreted as SO(3)-valued image.

\begin{figure}[t!]
\includegraphics[width=\linewidth, clip=true, trim=260pt 120pt 230pt 120pt]{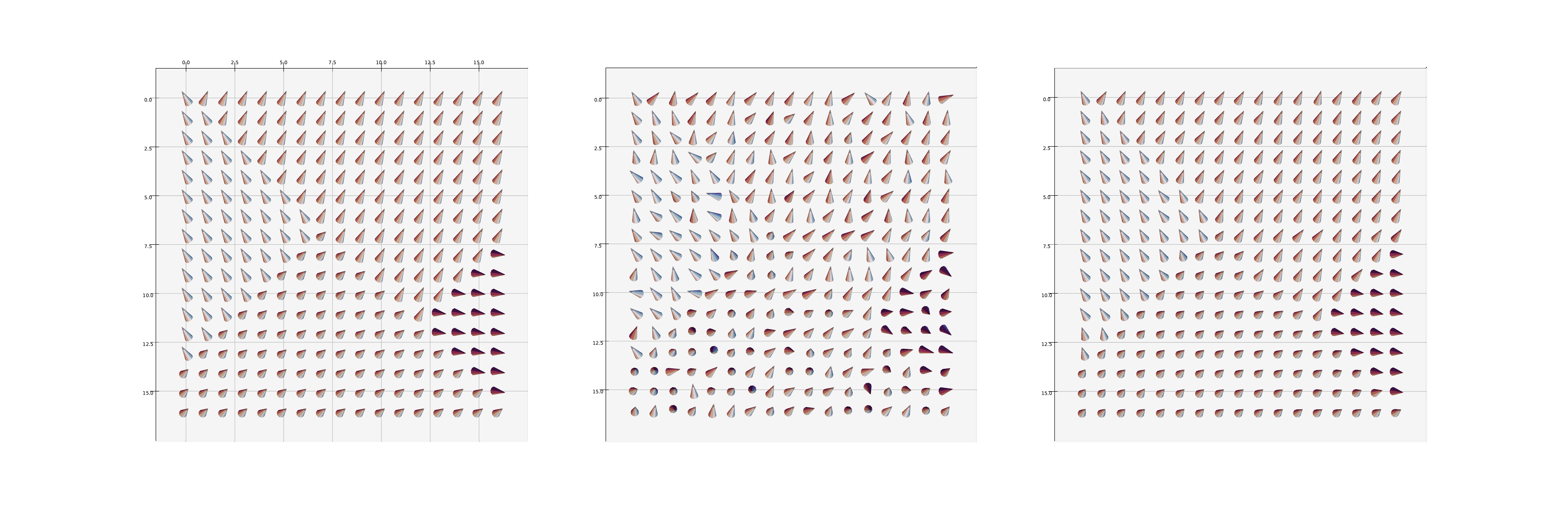}
\vspace{-0.5cm}
\caption{
Denoising of a synthetic SO(3)-image 
($90 \times 90$ pixels)
inspired by \cite{Ken23}, 
following \cite{BeBrSt23} (from left to right, downsampled results): 
(i) ground truth, 
(ii) noisy measurment generated by the von~Mises--Fisher distribution 
with capacity $\kappa_1~=~10$ for the rotation angles
and $\kappa_2~=~10$ for the rotation axis.
(iii) solution via Alg.~\ref{alg:1} 
($\lambda~=~0.10$, $\rho~=~100$)
without final projection
and with MSE $5.889\cdot10^{-3}$
and averaged distance to the unit quaternions $1.45\cdot 10^{-7}$.
}
\label{fig:SO(3)-2D-synthetic}
\end{figure}

\bibliographystyle{abbrv}
\bibliography{literatur}

\end{document}